\newtheorem{theorem}{Theorem}[section]
\newtheorem{proposition}[theorem]{Proposition}
\newtheorem{corollary}[theorem]{Corollary}
\theoremstyle{definition}
\newtheorem{definition}[theorem]{Definition}
\newtheorem{example}[theorem]{Example}
\theoremstyle{remark}
\newtheorem{remark}[theorem]{Remark}
\numberwithin{equation}{section}
\begin{document}
\setcounter{page}{1}

\title[Functional Version]{Functional Version for Furuta Parametric Relative Operator Entropy}

\author[M. Ra\"{\i}ssouli, S. Furuichi]{Mustapha Ra\"{\i}ssouli$^{1,2}$ and Shigeru Furuichi$^{3}$}

\address{$^{1}$ Department of Mathematics, Science Faculty, Taibah University,
Al Madinah Al Munawwarah, P.O.Box 30097, Zip Code 41477, Saudi Arabia.}

\address{$^{2}$ Department of Mathematics, Science Faculty, Moulay Ismail University, Meknes, Morocco.}

\address{$^3$Department of Information Science, College of Humanities and Sciences, Nihon University, 3-25-40, Sakurajyousui, Setagaya-ku, Tokyo, 156-8550, Japan.}

\email{\textcolor[rgb]{0.00,0.00,0.84}{raissouli.mustapha@gmail.com}}
\email{\textcolor[rgb]{0.00,0.00,0.84}{furuich@chs.nihon-u.ac.jp}}

\subjclass[2010]{Primary 46N10; Secondary 46A20, 47A63, 47N10, 39B62, 52A41.}

\keywords{Operator inequalities, functional inequalities, operator entropies, convex analysis.}

\date{Received: xxxxxx; Revised: yyyyyy; Accepted: zzzzzz.}

\begin{abstract}
Functional version for the so-called Furuta parametric relative operator entropy is here investigated. Some related functional inequalities are also discussed. The theoretical results obtained by our functional approach immediately imply those of operator versions in a simple, fast and nice way.
\end{abstract}

\maketitle

\section{\bf Introduction}

Let $H$ be a complex Hilbert space. We denote by ${\mathcal B}(H)$ the $\mathbb{C}^*$-algebra of bounded linear operators acting on $H$ and by ${\mathcal B}^{+*}(H)$ the open cone of all (self-adjoint) positive invertible operators in ${\mathcal B}(H)$. Let $A,B\in{\mathcal B}^{+*}(H)$ and $p\in[0,1]$ be a real number. The following
\begin{multline*}
A\nabla_{p}B=(1-p)A+pB,\;
A!_{p}B=\Big((1-p)A^{-1}+pB^{-1}\Big)^{-1},\\
A\sharp_{p}B=A^{1/2}\Big(A^{-1/2}BA^{-1/2}\Big)^{p}A^{1/2}
\end{multline*}
are known in the literature as the weighted arithmetic mean, weighted harmonic mean and weighted geometric mean of $A$ and $B$, respectively. If $p=1/2$ they are simply denoted by $A\nabla B,\; A!B$ and $A\sharp B$, respectively. The previous operator means satisfy the following relationships
\begin{equation}\label{eq11}
A\nabla_{p}B=B\nabla_{1-p}A,\;\; A!_{p}B=B!_{1-p}A,\;\; A\sharp_{p}B=B\sharp_{1-p}A.
\end{equation}

It is well-known that the following double inequality

\begin{equation}\label{eq12}
A!_{p}B\leq A\sharp_{p}B\leq A\nabla_{p}B
\end{equation}
holds for any $A,B\in{\mathcal B}^{+*}(H)$ and $p\in[0,1]$. Here, the notation $T\leq S$ means that $T,S\in{\mathcal B}(H)$ are self-adjoint and $S-T$ is positive semi-definite.

Otherwise, the relative operator entropy $S(A|B)$ and the Tsallis relative operator entropy $T_{p}(A|B)$ are, respectively, defined by, see \cite{FK,F1,IIKTW}
$$S(A|B)=A^{1/2}\log\big(A^{-1/2}BA^{-1/2}\big)A^{1/2},\;\;\; T_{p}(A|B)=\frac{A\sharp_{p}B-A}{p},\;\; p\neq0.$$
The following double inequality is known in the literature:
\begin{equation}\label{eq13}
A-AB^{-1}A\leq S(A|B)\leq B-A.
\end{equation}

In \cite{F2}, Furuta introduced a parametric extension of $S(A|B)$ as follows
\begin{equation}\label{eq14}
S_{p}(A|B)=A^{1/2}\big(A^{-1/2}BA^{-1/2}\big)^{p}\log\big(A^{-1/2}BA^{-1/2}\big)A^{1/2}.
\end{equation}
In fact, $S_p(A|B)$ was introduced in \cite{F2} for any real number $p$, but here we restrict ourselves to the case $p\in[0,1]$.

As pointed out in \cite{F2}, it is not hard to see that
\begin{equation*}
S_0(A|B)=S(A|B),\; S_1(A|B)=-S(B|A)\;\; \mbox{and}\;\; S_{p}(A|B)=-S_{1-p}(B|A).
\end{equation*}

The fundamental goal of this paper is to give an extension of $S_p(A|B)$ when the operator variables $A$ and $B$ are (convex) functionals. Some functional relationships and inequalities are provided as well. The related operator versions are deduced in a fast and nice way.

\section{\bf Functional Extensions}

The previous operator concepts have been extended from the case that the variables are positive operators to the case that the variables are convex functionals, see \cite{RAM1}.

Let $\tilde{\mathbb R}^{H}$ be the extended space of all functionals defined from $H$ into ${\mathbb R}\cup\{+\infty\}$. Let $f,g\in\tilde{\mathbb R}^{H}$ be two given functionals (convex or not) and $p\in(0,1)$. The following
$${\mathcal A}_{p}(f,g)=(1-p)f+pg,$$
$${\mathcal H}_{p}(f,g)=\Big((1-p)f^*+pg^*\Big)^*,$$
\begin{equation*}
{\mathcal
G}_{p}(f,g)=\displaystyle{\frac{\sin(p\pi)}{\pi}\int_{0}^1\frac{t^{p-1}}{(1-t)^{p}}}{\mathcal H}_{t}(f,g)dt
\end{equation*}
are called, by analogy, the weighted functional arithmetic mean, the weighted harmonic mean and the weighted geometric mean of $f$ and $g$, respectively. Here, the notation $f^*$ refers to the Fenchel conjugate of $f$ defined through
\begin{equation}\label{eq22}
\forall x^*\in H\;\;\;\;\;\;\; f^*(x^*)=\sup_{x\in H}\big\{\Re e\langle x^*,x\rangle-f(x)\big\}.
\end{equation}

For $p=1/2$ we will denote the previous functional means by ${\mathcal A}(f,g),\; {\mathcal H}(f,g)$ and ${\mathcal G}(f,g)$, respectively. We extend these means on the whole interval $[0,1]$ by setting:
\begin{equation}\label{eq23}
{\mathcal A}_{0}(f,g)={\mathcal H}_{0}(f,g)={\mathcal G}_{0}(f,g)=f,\;\;{\mathcal A}_{1}(f,g)={\mathcal H}_{1}(f,g)={\mathcal G}_{1}(f,g)=g.
\end{equation}
It is worth mentioning that (\ref{eq23}) is not immediate from the definition of their related functional means, since our involved functionals $f$ and/or $g$ can take the value $+\infty$, with the convention $0.(+\infty)=(+\infty)-(+\infty)=+\infty$.

With this, analogous relationships of (\ref{eq11}) for the previous functional means are also valid, i.e.
\begin{equation*}
{\mathcal A}_p(f,g)={\mathcal A}_{1-p}(g,f),\;\; {\mathcal H}_p(f,g)={\mathcal H}_{1-p}(g,f),\;\; {\mathcal G}_p(f,g)={\mathcal G}_{1-p}(g,f).
\end{equation*}
In fact, the two first relations are immediate from their definitions and for the third one see a detailed proof in \cite{RB}.  Also, analog of (\ref{eq12}), i.e.
\begin{equation}\label{eq25}
{\mathcal H}_p(f,g)\leq{\mathcal G}_p(f,g)\leq{\mathcal A}_p(f,g)
\end{equation}
holds for any $f,g\in\tilde{\mathbb R}^{H}$ and $p\in[0,1]$. Here the notation $f\leq g$ means that $g(x)-f(x)\geq0$ for all $x\in H$, with the convention $+\infty-(+\infty)=+\infty$ as usual in convex analysis. The double inequality (\ref{eq25}) implies that the three involved functional means are with finite values whenever $f$ and $g$ are as well.

In two earlier papers \cite{RAM1} and \cite{RAM2} we extended $S(A|B)$ and $T_{p}(A|B)$ from operators to (convex) functionals, respectively, as follows:
\begin{equation*}
{\mathcal S}(f|g)=\int_0^1\frac{{\mathcal H}_t(f,g)-f}{t}dt,
\end{equation*}
\begin{equation*}
{\mathcal T}_{p}(f|g)=\frac{{\mathcal G}_{p}(f,g)-f}{p},\;\; p\neq0.
\end{equation*}

The previous functional concepts were constructed as extensions of their related operator versions in the following sense: if ${\mathcal O}(A,B)$ is one of the previous operator concepts, its functional extension ${\mathcal F}(f,g)$ is such that
\begin{equation}\label{eq27}
{\mathcal F}\big(f_A,f_B\big)=f_{{\mathcal O}(A,B)},
\end{equation}
where the notation $f_T$, for any $T\in{\mathcal B(H)}$, refers to the quadratic function generated by the operator $T$, i.e.
$f_T(x)=(1/2)\langle Tx,x\rangle$ for all $x\in H$.

\section{\bf Needed Tools}

Let $f\in\tilde{\mathbb R}^{H}$. We denote by ${\rm dom}\;f:=\{x\in H,\;\; f(x)<+\infty\}$
the so-called effective domain of $f$. The notation ${\rm int}({\rm dom}\;f)$ refers to the topological interior of ${\rm dom}\;f$ in $H$. The Fenchel conjugate $f^*$ of $f$ defined by (\ref{eq22}) satisfies
$$f^*(x^*):=\sup_{x\in{\rm dom}\;f}\big\{\Re e\langle x^*,x\rangle-f(x)\big\}$$
for any $x^*\in H$. As supremum of a family of affine (so convex) functions, $f^*$ is always convex even if $f$ is not. The conjugate map $f\longmapsto f^*$ is point-wise increasing and convex. That is, $f\leq g$ implies $g^*\leq f^*$, and the inequality
$$\big((1-p)f+pg\big)^*\leq(1-p)f^*+pg^*$$
holds for any $f,g\in\tilde{\mathbb R}^{H}$ and $p\in[0,1]$.

The sub-differential of $f$ at $x\in{\rm dom}\;f$ is the set $\partial f(x)$ defined through
$$\partial f(x)=\big\{x^*\in H;\;\; \forall z\in H\;\;\;\;\; f(z)\geq f(x)+\Re e\langle x^*,z-x\rangle\big\}.$$
As it is well known, $\partial f(x)$ is a (possibly empty) convex and closed set. If $x\in{\rm int}({\rm dom}\;f)$ then $\partial f(x)\neq\emptyset$. In the case where $\partial f(x)\neq\emptyset$ then we have the equivalence:
\begin{equation*}
x^*\in \partial f(x)\Longleftrightarrow f(x)+f^*(x^*)=\Re e\langle x^*,x\rangle.
\end{equation*}

As usual we denote by $\Gamma_{0}(H)$ the cone of all functionals $f\in\tilde{\mathbb R}^{H}$ that are convex, lower semi-continuous and proper (i.e. not identically equal to $+\infty$). It is well-known that $f^{**}:=(f^*)^*\leq f$ for any $f\in\tilde{\mathbb R}^{H}$ and, $f\in\Gamma_{0}(H)$ if and only if $f=f^{**}:=(f^*)^*$. Moreover, $x^*\in\partial f(x)$ always implies $x\in\partial f^*(x^*)$, with reversed implication provided that $f\in\Gamma_{0}(H)$.

The function $f$ is called G\^{a}teaux-differentiable (in short G-differentiable) at $x$ if the directional derivative
$$f^{'}(x,d)=\lim_{t\downarrow0}\frac{f(x+td)-f(x)}{t}$$
of $f$ at $x$ exists in all direction $d\in H$ and the map $d\longmapsto f^{'}(x,d)$ is linear. In this case we write $f^{'}(x,d)=\nabla f(x).d$ and $\nabla f(x)$ is called the G-derivative of $f$ at $x$. It is well known that if $f$ is convex and G-differentiable at $x$ then $\partial f(x)=\{\nabla f(x)\}$.

For the sake of clearness and simplicity for the reader, we state the following example illustrating the previous concepts.

\begin{example}\label{ex31}
Let $A\in{\mathcal B}(H)$ and let $f_A$ be the quadratic function associated to $A$, i.e. $f_A(x)=(1/2)\langle Ax,x\rangle$ for all $x\in H$.\\
(i) Assume that $A\in{\mathcal B}^{+*}(H)$. Then $f_A$ is convex and G-differentiable on $H$ and so
$$\forall x\in H\;\;\;\;\;\; \partial f_A(x)=\{\nabla f_A(x)\}=\{Ax\}.$$
The coefficient $1/2$ appearing in $f_A$ enjoys a symmetry role, in the aim to have
$$(f_A)^*(x^*)=(1/2)\langle A^{-1}x^*,x^*\rangle\;\; \mbox{for all}\; x^*\in H,\;\; \mbox{or in short}\;\;\big(f_A\big)^*=f_{A^{-1}}.$$
(ii) For any $A,B\in{\mathcal B}(H)$ it is easy to check that $f_A\pm f_B=f_{A\pm B}$ and $f_A(Bx)=f_{BAB}(x)$ for any $x\in H$.
\end{example}

The following result, which will be needed later, has been proved in \cite{RAM3}.

\begin{theorem}\label{th31}
Let $f\in\Gamma_{\circ}(H)$ be such that ${\rm int}({\rm dom}\;f)$ is nonempty. Then\\
(i) The following inequality
\begin{equation}\label{eq32}
\sup_{x^*\in\partial f(x)}\big(f^*-g^*\big)(x^*)\leq {\mathcal S}(f/g)(x)\leq (g-f)(x)
\end{equation}
holds true for all $x\in {\rm int}({\rm dom}\;f)$.\\
(ii) If $f$ is moreover G-differentiable at $x$, then we have
\begin{equation}\label{eq33}
f^*\big(\nabla f(x)\big)-g^*\big(\nabla f(x)\big)\leq {\mathcal S}(f/g)(x)\leq (g-f)(x).
\end{equation}
\end{theorem}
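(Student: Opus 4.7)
My plan is to bound the integrand of the defining formula
$$
{\mathcal S}(f|g)(x)=\int_{0}^{1}\frac{{\mathcal H}_{t}(f,g)(x)-f(x)}{t}\,dt
$$
pointwise in $t\in(0,1)$, both from above and from below, and then integrate. Part (ii) will then drop out of Part (i) at no extra cost.

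The upper bound is immediate from (\ref{eq25}): since ${\mathcal H}_{t}(f,g)\leq{\mathcal A}_{t}(f,g)=(1-t)f+tg$, we get ${\mathcal H}_{t}(f,g)(x)-f(x)\leq t\bigl(g(x)-f(x)\bigr)$, so dividing by $t$ and integrating produces exactly $(g-f)(x)$. The case $g(x)=+\infty$ is trivial under the convex-analytic convention adopted in the paper.

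For the lower bound, I would fix $x^{*}\in\partial f(x)$, which is nonempty because $x\in{\rm int}({\rm dom}\,f)$ and $f\in\Gamma_{0}(H)$. The Fenchel--Young identity then gives $f(x)+f^{*}(x^{*})=\Re e\langle x^{*},x\rangle$. Next I would exploit the very definition ${\mathcal H}_{t}(f,g)=\bigl((1-t)f^{*}+tg^{*}\bigr)^{*}$ by writing the outer conjugate as a supremum and plugging in the particular point $y^{*}=x^{*}$:
$$
{\mathcal H}_{t}(f,g)(x)\;\geq\;\Re e\langle x^{*},x\rangle-(1-t)f^{*}(x^{*})-tg^{*}(x^{*})\;=\;f(x)+t\bigl(f^{*}-g^{*}\bigr)(x^{*}).
$$
Dividing by $t$ and integrating over $[0,1]$ yields ${\mathcal S}(f|g)(x)\geq (f^{*}-g^{*})(x^{*})$; taking the supremum over $x^{*}\in\partial f(x)$ closes the first inequality of (\ref{eq32}). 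Part (ii) is then a one-line corollary: G-differentiability forces $\partial f(x)=\{\nabla f(x)\}$, and (\ref{eq33}) is just (\ref{eq32}) with the supremum suppressed.

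The hard part here is really only conceptual: recognizing that plugging $y^{*}=x^{*}$ into the supremum defining ${\mathcal H}_{t}(f,g)(x)$ is precisely what converts the Fenchel--Young equality into a linear-in-$t$ lower bound, which, crucially, survives division by $t$ and integration. The remaining work (well-definedness of the integrals, trivial handling of infinite values, and commuting supremum with integration, which is sidestepped by fixing $x^{*}$ first) is routine.
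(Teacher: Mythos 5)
Your proof is correct: the upper bound follows from ${\mathcal H}_t(f,g)\leq{\mathcal A}_t(f,g)$, and the lower bound from testing the supremum defining $\big((1-t)f^*+tg^*\big)^*(x)$ at $y^*=x^*\in\partial f(x)$ together with the Fenchel--Young equality, which is exactly the natural route. Note that the paper itself gives no proof of this theorem (it is quoted from reference [RAM3]), so there is nothing in-text to compare against, but your argument is the standard one and part (ii) indeed follows immediately since $\partial f(x)=\{\nabla f(x)\}$ under G-differentiability.
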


As explained in \cite{RAM3}, (\ref{eq32}) as well as (\ref{eq33}) is a functional extension of (\ref{eq13}) from positive operators to convex functionals.

For the sake of simplicity for the reader, we need to introduce auxiliary notation. For $f,g\in\tilde{\mathbb R}^{H}$ and $p\in[0,1]$ we set
\begin{equation}\label{eq34}
{\mathcal T}_p^*(f|g)=\frac{\big({\mathcal G}_p(f,g)\big)^*-f^*}{p},\; p\neq0.
\end{equation}

We have the following result summarizing the elementary properties of ${\mathcal T}_p^*(f|g)$.

\begin{proposition}\label{pr31}
The following assertions hold:\\
(i) For any $p\in[0,1)$ one has
\begin{equation*}
{\mathcal T}_{1-p}^*(g|f)=\frac{\big({\mathcal G}_p(f,g)\big)^*-g^*}{1-p}.
\end{equation*}
(ii) For all $p\in(0,1]$, the left hand-side of the following inequality
\begin{equation*}
\frac{\big({\mathcal A}_p(f,g)\big)^*(x^*)-f^*(x^*)}{p}\leq{\mathcal T}_p^*(f|g)(x^*)\leq g^*(x^*)-f^*(x^*)
\end{equation*}
holds for any $x^*\in H$ while the right hand-side holds for $x^*$ such that $g^*(x^*)=+\infty$ or $x^*\in{\rm dom}\;f^*$.
\end{proposition}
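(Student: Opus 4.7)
\emph{Proof plan.} The approach is to exploit, for part (i), the symmetry of the weighted geometric functional mean and, for part (ii), the double inequality (2.5) combined with the order-reversing property of the Fenchel conjugate.

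For part (i), I would simply substitute the already-established identity $\mathcal{G}_p(f,g)=\mathcal{G}_{1-p}(g,f)$ (noted just after (2.3)) into the definition (3.4) of $\mathcal{T}_q^*$ taken with parameter $q=1-p$ and variables $g,f$ in place of $f,g$. This rewrites
$$\mathcal{T}_{1-p}^*(g|f)=\frac{\bigl(\mathcal{G}_{1-p}(g,f)\bigr)^*-g^*}{1-p}\quad\text{as}\quad\frac{\bigl(\mathcal{G}_p(f,g)\bigr)^*-g^*}{1-p}$$
in one line, and no further argument is needed.

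For part (ii) I plan to read both inequalities off the double inequality (2.5), $\mathcal{H}_p(f,g)\leq\mathcal{G}_p(f,g)\leq\mathcal{A}_p(f,g)$, by passing to Fenchel conjugates, which reverses the order. The left inequality comes from $\mathcal{G}_p(f,g)\leq\mathcal{A}_p(f,g)$: taking conjugates yields $(\mathcal{A}_p(f,g))^*\leq(\mathcal{G}_p(f,g))^*$, and then subtracting $f^*(x^*)$ and dividing by $p>0$ produces exactly the lower bound on $\mathcal{T}_p^*(f|g)(x^*)$. The right inequality uses $\mathcal{H}_p(f,g)\leq\mathcal{G}_p(f,g)$, whose conjugate form is $(\mathcal{G}_p(f,g))^*\leq(\mathcal{H}_p(f,g))^*$. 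Since $\mathcal{H}_p(f,g)=\bigl((1-p)f^*+pg^*\bigr)^*$ by definition, the Fenchel--Moreau inequality $h^{**}\leq h$ applied to $h=(1-p)f^*+pg^*$ gives
$$\bigl(\mathcal{G}_p(f,g)\bigr)^*\leq (1-p)f^*+pg^*,$$
and after subtracting $f^*(x^*)$ and dividing by $p$ one recovers the upper bound $\mathcal{T}_p^*(f|g)(x^*)\leq g^*(x^*)-f^*(x^*)$.

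The main obstacle is the bookkeeping of the $+\infty-(+\infty)$ convention that lurks in both inequalities, and this is precisely what explains the asymmetric hypotheses on $x^*$. For the left bound the convention $+\infty-(+\infty)=+\infty$ keeps the inequality true at every $x^*\in H$, since at a point where $f^*(x^*)=+\infty$ both sides collapse consistently. For the right bound, however, the algebraic step $(1-p)f^*(x^*)+pg^*(x^*)-f^*(x^*)=p\bigl(g^*(x^*)-f^*(x^*)\bigr)$ is meaningful only when $f^*(x^*)<+\infty$, that is $x^*\in\mathrm{dom}\,f^*$, or when $g^*(x^*)=+\infty$, in which case the upper bound is vacuously $+\infty$ by convention. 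The remaining configuration, $f^*(x^*)=+\infty$ with $g^*(x^*)$ finite, is exactly the one the hypothesis rules out; verifying that this enumeration is exhaustive is the only delicate point of the argument.
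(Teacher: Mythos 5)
Your proposal is correct and follows essentially the same route as the paper: part (i) by substituting ${\mathcal G}_p(f,g)={\mathcal G}_{1-p}(g,f)$ into the definition, and part (ii) by conjugating the double inequality (\ref{eq25}) and using $\big({\mathcal H}_p(f,g)\big)^*=\big((1-p)f^*+pg^*\big)^{**}\leq(1-p)f^*+pg^*$. Your explicit bookkeeping of the $+\infty-(+\infty)$ cases, which the paper leaves implicit, correctly accounts for the asymmetric hypotheses on $x^*$.
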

\begin{proof}
(i) Follows from (\ref{eq34}) with the relation ${\mathcal G}_{p}(f,g)={\mathcal G}_{1-p}(g,f)$.\\
(ii) From (\ref{eq25}) we obtain by taking the conjugate side to side
$$\big({\mathcal A}_p(f,g)\big)^*\leq\big({\mathcal G}_p(f,g)\big)^*\leq\big({\mathcal H}_p(f,g)\big)^*.$$
Remarking that
$$\big({\mathcal H}_p(f,g)\big)^*\leq(1-p)f^*+pg^*$$
we then deduce the desired result.
\end{proof}

\begin{proposition}\label{pr32}
For any $A,B\in{\mathcal B}^{+*}(H)$ and $p\in(0,1]$ there holds
$${\mathcal T}_p^*(f_A|f_B)=f_{T_p(A^{-1}|B^{-1})}.$$
\end{proposition}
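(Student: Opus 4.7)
The plan is to unwind the definition of $\mathcal{T}_p^*$ and translate every ingredient into the quadratic world, where it reduces to the operator identity defining $T_p$.

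First, I would invoke the fundamental compatibility relation (\ref{eq27}) for the functional geometric mean, namely $\mathcal{G}_p(f_A,f_B)=f_{A\sharp_p B}$, which is valid for $A,B\in\mathcal{B}^{+*}(H)$. Taking Fenchel conjugates and using Example \ref{ex31}(i), which gives $(f_T)^*=f_{T^{-1}}$ whenever $T\in\mathcal{B}^{+*}(H)$, I get
\begin{equation*}
\bigl(\mathcal{G}_p(f_A,f_B)\bigr)^*=\bigl(f_{A\sharp_p B}\bigr)^*=f_{(A\sharp_p B)^{-1}}.
\end{equation*}
Similarly $(f_A)^*=f_{A^{-1}}$.

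Next I would use the well-known inverse identity $(A\sharp_p B)^{-1}=A^{-1}\sharp_p B^{-1}$ for the weighted geometric operator mean, which follows directly from the defining formula of $\sharp_p$. This rewrites the previous line as
\begin{equation*}
\bigl(\mathcal{G}_p(f_A,f_B)\bigr)^*=f_{A^{-1}\sharp_p B^{-1}}.
\end{equation*}
Substituting into (\ref{eq34}) and exploiting the elementary rules $f_T\pm f_S=f_{T\pm S}$ and $\alpha f_T=f_{\alpha T}$ from Example \ref{ex31}(ii), I obtain
\begin{equation*}
\mathcal{T}_p^*(f_A|f_B)=\frac{f_{A^{-1}\sharp_p B^{-1}}-f_{A^{-1}}}{p}=f_{\frac{A^{-1}\sharp_p B^{-1}-A^{-1}}{p}}=f_{T_p(A^{-1}|B^{-1})},
\end{equation*}
which is exactly the claimed identity.

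The computation is essentially book-keeping; the only non-formal ingredient is the inverse identity $(A\sharp_p B)^{-1}=A^{-1}\sharp_p B^{-1}$, which is standard and can be checked directly from the definition $A\sharp_p B=A^{1/2}(A^{-1/2}BA^{-1/2})^p A^{1/2}$ by computing the inverse of the right-hand side. I do not expect any real obstacle, since the paper's construction of $\mathcal{G}_p$ was designed precisely so that (\ref{eq27}) holds; the result is then a one-line consequence of the scalar operations $T\mapsto T^{-1}$ and the linearity of the map $T\mapsto f_T$.
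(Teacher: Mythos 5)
Your proposal is correct and follows exactly the same route as the paper's own proof: apply (\ref{eq27}) to get $\mathcal{G}_p(f_A,f_B)=f_{A\sharp_p B}$, use $(f_T)^*=f_{T^{-1}}$ and the identity $(A\sharp_p B)^{-1}=A^{-1}\sharp_p B^{-1}$, then conclude by the linearity rules for $T\mapsto f_T$. There is nothing to add.
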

\begin{proof}
By (\ref{eq34}) with (\ref{eq27}) we have
$${\mathcal T}_p^*(f_A|f_B)=\frac{\big({\mathcal G}_p(f_A,f_B)\big)^*-f_A^*}{p}=\frac{f_{A\sharp_pB}^*-f_A^*}{p}=
\frac{f_{(A\sharp_pB)^{-1}}-f_{A^{-1}}}{p}=\frac{f_{A^{-1}\sharp_pB^{-1}}-f_{A^{-1}}}{p}.$$
This, with the fact that $\alpha f_T=f_{\alpha T}$ and $f_T-f_S=f_{T-S}$ for any $\alpha\in{\mathbb R}$ and $T,S\in{\mathcal B}(H)$, immediately yields the desired result.
\end{proof}

\section{\bf Functional Version of $S_p(A|B)$}

As already pointed before, our aim here is to give an analog of $S_p(A|B)$ when the operator arguments $A$ and $B$ are (convex) functionals $f$ and $g$, respectively. Such analog seems to be hard to define from (\ref{eq14}), since (\ref{eq14}) involves product of operators whose analogues for functionals is not known yet. For this, we need to state the following result.

\begin{theorem}
The following equalities
\begin{equation}\label{eqP}
S_p(A|B)=-\frac{S\big(A\sharp_pB|A\big)}{p}=\frac{S\big(A\sharp_pB|B\big)}{1-p}.
\end{equation}
hold for any $A,B\in{\mathcal B}^{+*}(H)$ and $p\in(0,1)$.
\end{theorem}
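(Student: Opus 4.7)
The plan is to read both sides of \eqref{eqP} as derivatives of the weighted geometric mean with respect to the weight parameter. Setting $T=A^{-1/2}BA^{-1/2}$, the identity $A\sharp_r B=A^{1/2}T^r A^{1/2}$ and functional calculus yield
$$\frac{d}{dr}\big(A\sharp_r B\big)\bigg|_{r=p}=A^{1/2}T^p\log T\cdot A^{1/2}=S_p(A|B),$$
and similarly, for any $X,Y\in{\mathcal B}^{+*}(H)$,
$$\frac{d}{ds}\big(X\sharp_s Y\big)\bigg|_{s=0}=X^{1/2}\log\big(X^{-1/2}YX^{-1/2}\big)X^{1/2}=S(X|Y).$$
Thus, with $X=A\sharp_pB$, both $S(A\sharp_pB|A)$ and $S(A\sharp_pB|B)$ become $s=0$ derivatives of curves $s\mapsto (A\sharp_pB)\sharp_s A$ and $s\mapsto(A\sharp_pB)\sharp_s B$ respectively.

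The next step is to establish the two ``geodesic'' identities
$$(A\sharp_p B)\sharp_s A=A\sharp_{p(1-s)}B,\qquad (A\sharp_pB)\sharp_s B=A\sharp_{p+s(1-p)}B.$$
Both are consequences of the congruence invariance $MXM^*\sharp_s MYM^*=M(X\sharp_s Y)M^*$ applied with $M=A^{-1/2}$. This reduces the first identity to evaluating $T^p\sharp_s I$ and the second to evaluating $T^p\sharp_s T$; since in each case the two arguments commute, the weighted geometric mean collapses to a power, giving $T^{p(1-s)}$ and $T^{p+s(1-p)}$ respectively. Multiplying back by $A^{1/2}$ on both sides recovers the asserted formulas.

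Combining the two ingredients finishes the proof. Differentiating the first geodesic identity at $s=0$ yields, by the chain rule,
$$S(A\sharp_pB|A)=\frac{d}{ds}\big[A\sharp_{p(1-s)}B\big]\bigg|_{s=0}=-p\,S_p(A|B),$$
which is the first equality in \eqref{eqP}; the second equality follows in the same way from the second geodesic identity, producing a factor $(1-p)$ in place of $-p$.

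I do not anticipate any serious obstacle. The congruence invariance of $\sharp_s$ is standard, and the reduction to a commuting calculation $T^p\sharp_s T^q=T^{(1-s)p+sq}$ is elementary spectral calculus. The only mildly delicate point is interchanging differentiation with functional calculus to identify $\tfrac{d}{ds}U^s\big|_{s=0}=\log U$, but this is routine for the bounded positive invertible operators $T$ and $C^{-1/2}AC^{-1/2}$ that appear here.
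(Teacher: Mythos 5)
Your proof is correct, and it takes a genuinely different route from the paper's. The paper proves \eqref{eqP} by invoking the transformer identity $T^*S(A|B)T=S(T^*AT|T^*BT)$ (justified via Kubo--Ando theory and the integral representation $S(A|B)=\int_0^1 t^{-1}(A!_tB-A)\,dt$), reducing $S(A\sharp_pB|A)$ to $A^{1/2}S(T^p|I)A^{1/2}$ with $T=A^{-1/2}BA^{-1/2}$, and then using the evaluation $S(X|I)=-X\log X$. You instead read $S_p(A|B)$ and $S(X|Y)$ as derivatives of the geodesic $r\mapsto A\sharp_rB$ (at $r=p$ and, after rebasing, at $s=0$), prove the reparametrization identities $(A\sharp_pB)\sharp_sA=A\sharp_{p(1-s)}B$ and $(A\sharp_pB)\sharp_sB=A\sharp_{p+s(1-p)}B$ from congruence invariance of $\sharp_s$ plus the commuting-case collapse $T^p\sharp_sT^q=T^{(1-s)p+sq}$, and conclude by the chain rule. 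What your approach buys is self-containedness: it needs only the explicit formula for $\sharp_s$, elementary spectral calculus, and the norm-differentiability of $s\mapsto U^s=e^{s\log U}$ for bounded positive invertible $U$, with no appeal to the transformer property of the relative operator entropy or its integral form; it also makes the factors $-p$ and $1-p$ transparent as derivatives of the affine reparametrizations $s\mapsto p(1-s)$ and $s\mapsto p+s(1-p)$. The paper's route is shorter on the page but leans on machinery (Kubo--Ando) that your argument bypasses. One cosmetic remark: the operator $C$ in your final sentence is undefined (presumably $C=A\sharp_pB$), but this does not affect the argument.
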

\begin{proof}
Indeed, we have the property
$$
T^*S(A|B)T = S(T^*AT|T^*BT)
$$
for any $A,B\in {\mathcal B}^{*}(H)$ and any invertible operator $T \in {\mathcal B}(H)$
by using Kubo-Ando theory \cite{KA} and the integral form
$$
S(A|B)=\int_0^1 \frac{A!_tB-A}{t}dt.
$$
We thus have the first equality as
\begin{multline*}
S\left(A\sharp_pB|A\right) =A^{1/2}S\left(I\sharp_pA^{-1/2}BA^{-1/2}|I\right)A^{1/2}=A^{1/2}S\left((A^{-1/2}BA^{-1/2})^p|I\right)A^{1/2}\\
=-A^{1/2}(A^{-1/2}BA^{-1/2})^p\log(A^{-1/2}BA^{-1/2})^pA^{1/2}=-pS_p(A|B),
\end{multline*}
since $S(A|I)=-A\log A$ for any $A\in{\mathcal B}^{+*}(H)$.

The second equality can be proved in a similar manner.
\end{proof}

Now, to give a functional version of $S_p(A|B)$ we use (\ref{eqP}) which is more appropriate for our aim, since (\ref{eqP}) involves only operator concepts (relative operator entropy and operator geometric mean) whose functional extensions were already done. Taking into account to have a symmetric character between $p$ and $1-p$ in our desired definition, we then put the following.

\begin{definition}
Let $f,g\in\tilde{\mathbb R}^{H}$ and $p\in[0,1]$. We set
\begin{equation}\label{eq41}
{\mathcal S}_p\big(f|g\big)=\frac{{\mathcal S}\Big(\big({\mathcal G}_p(f,g)\big)|g\Big)}{2(1-p)}
-\frac{{\mathcal S}\Big(\big({\mathcal G}_p(f,g)\big)|f\Big)}{2p},
\end{equation}
with
$${\mathcal S}_0(f|g)={\mathcal S}(f|g)\;\;\mbox{and}\;\; {\mathcal S}_1(f|g)=-{\mathcal S}(g|f).$$
\end{definition}

As first result we state the following.

\begin{proposition}\label{pr41}
Let $f,g\in\tilde{\mathbb R}^{H}$. Then we have
\begin{equation}\label{eq42}
{\mathcal S}_{1/2}(f|g)={\mathcal S}\Big(\big({\mathcal G}(f,g)\big)|g\Big)-{\mathcal S}\Big(\big({\mathcal G}(f,g)\big)|f\Big).
\end{equation}
Further, if ${\rm dom}\;f={\rm dom}\;g=H$ then the equality
\begin{equation}\label{eq43}
{\mathcal S}_p(f|g)=-{\mathcal S}_{1-p}(g|f)
\end{equation}
holds for any $p\in(0,1)$.
\end{proposition}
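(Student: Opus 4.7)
The plan is to extract both identities directly from Definition~4.1, using only the symmetry relation $\mathcal{G}_p(f,g)=\mathcal{G}_{1-p}(g,f)$ recalled in Section~2 and the convention that specializes the weighted means at $p=1/2$.

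For (\ref{eq42}), the route is a one-line substitution. Putting $p=1/2$ in (\ref{eq41}) makes both factors $1/(2(1-p))$ and $1/(2p)$ equal to $1$, while $\mathcal{G}_{1/2}(f,g)=\mathcal{G}(f,g)$ by the $p=1/2$ convention; the resulting expression is exactly (\ref{eq42}). No hypothesis on the domains is needed for this step, since it is a purely formal specialization.

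For (\ref{eq43}), I would apply (\ref{eq41}) with the pair $(g,f)$ and the parameter $1-p$, obtaining
\begin{equation*}
\mathcal{S}_{1-p}(g|f)=\frac{\mathcal{S}\bigl(\mathcal{G}_{1-p}(g,f)\,|\,f\bigr)}{2p}-\frac{\mathcal{S}\bigl(\mathcal{G}_{1-p}(g,f)\,|\,g\bigr)}{2(1-p)}.
\end{equation*}
Invoking $\mathcal{G}_{1-p}(g,f)=\mathcal{G}_p(f,g)$ turns the right-hand side into the negative of the right-hand side of (\ref{eq41}), which is precisely $-\mathcal{S}_p(f|g)$. So (\ref{eq43}) is again just bookkeeping once the symmetry of $\mathcal{G}_p$ is in hand.

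The only subtle point, and the reason for the hypothesis $\mathrm{dom}\,f=\mathrm{dom}\,g=H$, is that the final step silently reverses signs on a difference of two $\mathcal{S}$-terms. Under the convex-analysis convention $(+\infty)-(+\infty)=+\infty$, such a sign flip is illegitimate whenever both terms are infinite at some point. With $\mathrm{dom}\,f=\mathrm{dom}\,g=H$, the inequality (\ref{eq25}) forces $\mathcal{G}_p(f,g)$ to be finite-valued on $H$ (it is sandwiched between two finite-valued functionals), so the integrand $\bigl(\mathcal{H}_t(\mathcal{G}_p(f,g),\cdot)-\mathcal{G}_p(f,g)\bigr)/t$ defining each $\mathcal{S}$-term avoids the $\infty-\infty$ pathology. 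This is the step I expect to cost a little thought; once it is settled, the rest is purely formal manipulation of (\ref{eq41}).
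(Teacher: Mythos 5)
Your proposal is correct and follows essentially the same route as the paper: equality (\ref{eq42}) by direct specialization of (\ref{eq41}) at $p=1/2$, and equality (\ref{eq43}) via the symmetry ${\mathcal G}_{1-p}(g,f)={\mathcal G}_p(f,g)$, with the hypothesis ${\rm dom}\,f={\rm dom}\,g=H$ used exactly as the authors use it, namely to rule out the $(+\infty)-(+\infty)$ obstruction to flipping the sign of a difference. You even identify the same mechanism (finiteness of ${\mathcal G}_p(f,g)$ and of the ${\mathcal H}_t$-terms via (\ref{eq25})) that the paper invokes.
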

\begin{proof}
The equality (\ref{eq42}) is immediate from (\ref{eq41}). However, we mention that (\ref{eq43}) is not immediate from (\ref{eq41}), since our involved functionals could take the value $+\infty$. Indeed, we pay attention
to the fact that, if $\phi,\psi\in\tilde{\mathbb R}^{H}$, the equality $\phi-\psi=-(\psi-\phi)$ is not always true, unless ${\rm dom}\;\phi\cup{\rm dom}\;\psi=H$. For this reason we have assumed in our statement that ${\rm dom}\;f={\rm dom}\;g=H$ in the aim to guarantee that ${\mathcal H}_t\big({\mathcal G}_p(f,g),g\big)$ or ${\mathcal H}_t\big({\mathcal G}_p(f,g),f\big)$ is with finite values. With this, (\ref{eq43}) can be deduced from (\ref{eq41}) when we refer to the relationship ${\mathcal G}_p(\phi,\psi)={\mathcal G}_{1-p}(\psi,\phi)$ valid for any $\phi,\psi\in\tilde{\mathbb R}^{H}$ and $p\in[0,1]$.
\end{proof}

A connection relationship between the functional parametric entropy ${\mathcal S}_p(f|g)$ and the operator parametric entropy $S_p(A|B)$ is expressed by the following result.

\begin{proposition}\label{pr42}
Let $A,B\in{\mathcal B}^{+*}(H)$ and $p\in[0,1]$. Then we have:
\begin{equation}\label{eq44}
{\mathcal S}_p\big(f_A|f_B\big)=f_{S_p(A|B)}.
\end{equation}
\end{proposition}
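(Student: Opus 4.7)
The plan is to reduce the identity to a direct computation using the connection property (\ref{eq27}) together with the two representations of $S_p(A|B)$ established in (\ref{eqP}). Fix $p\in(0,1)$; the boundary cases $p=0$ and $p=1$ will be treated separately at the end since ${\mathcal S}_0$ and ${\mathcal S}_1$ are defined by convention.

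First I would substitute $f=f_A$, $g=f_B$ in the defining formula (\ref{eq41}). By the functional extension property (\ref{eq27}) applied to the geometric mean, one has ${\mathcal G}_p(f_A,f_B)=f_{A\sharp_pB}$, so the two inner entropies become
\begin{equation*}
{\mathcal S}\!\left({\mathcal G}_p(f_A,f_B)\,\big|\,f_B\right)={\mathcal S}(f_{A\sharp_pB}|f_B),\qquad
{\mathcal S}\!\left({\mathcal G}_p(f_A,f_B)\,\big|\,f_A\right)={\mathcal S}(f_{A\sharp_pB}|f_A).
\end{equation*}
Applying (\ref{eq27}) a second time, now to the relative operator entropy ${\mathcal S}$ (whose operator analog $S(A|B)$ satisfies ${\mathcal S}(f_A|f_B)=f_{S(A|B)}$, as recalled in the paper), these two quantities equal $f_{S(A\sharp_pB|B)}$ and $f_{S(A\sharp_pB|A)}$ respectively.

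Next I would use the elementary identities $\alpha f_T=f_{\alpha T}$ and $f_T-f_S=f_{T-S}$ (see Example~\ref{ex31}(ii)) to bring the right-hand side of (\ref{eq41}) into the single quadratic form
\begin{equation*}
{\mathcal S}_p(f_A|f_B)=f_{\frac{1}{2(1-p)}S(A\sharp_pB|B)-\frac{1}{2p}S(A\sharp_pB|A)}.
\end{equation*}
At this point Theorem (\ref{eqP}) gives $\tfrac{1}{1-p}S(A\sharp_pB|B)=S_p(A|B)$ and $\tfrac{1}{p}S(A\sharp_pB|A)=-S_p(A|B)$, so the operator appearing in the subscript collapses to $\tfrac{1}{2}S_p(A|B)+\tfrac{1}{2}S_p(A|B)=S_p(A|B)$, yielding (\ref{eq44}) for $p\in(0,1)$.

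Finally I would dispatch the boundary cases: for $p=0$ the definition gives ${\mathcal S}_0(f_A|f_B)={\mathcal S}(f_A|f_B)=f_{S(A|B)}=f_{S_0(A|B)}$, and for $p=1$ the convention ${\mathcal S}_1(f|g)=-{\mathcal S}(g|f)$ combined with $-f_T=f_{-T}$ and the identity $S_1(A|B)=-S(B|A)$ noted right after (\ref{eq14}) gives the required equality. There is no serious obstacle here; the proof is essentially a bookkeeping exercise, and the only point that needs mild care is that the two terms in (\ref{eq41}) are genuinely subtracted as quadratic forms on all of $H$, which is automatic because $f_{A\sharp_pB}$, $f_A$, $f_B$ all have full domain $H$, so the subtraction convention discussed in Proposition~\ref{pr41} causes no issue.
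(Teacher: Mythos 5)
Your proof is correct and follows essentially the same route as the paper's: substitute the quadratic functions into (\ref{eq41}), apply the extension property (\ref{eq27}) to both $\mathcal{G}_p$ and $\mathcal{S}$, and collapse the resulting quadratic forms via (\ref{eqP}) and the identities $\alpha f_T=f_{\alpha T}$, $f_T-f_S=f_{T-S}$. Your explicit treatment of the boundary cases $p=0,1$ is a small but welcome addition that the paper leaves implicit.
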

\begin{proof}
By (\ref{eq41}), with (\ref{eq27}) and (\ref{eqP}), we have
$${\mathcal S}_p\big(f_A|f_B\big)=\frac{{\mathcal S}\big(f_{A\sharp_pB}|f_B\big)}{2(1-p)}-\frac{{\mathcal S}\big(f_{A\sharp_pB}|f_A\big)}{2p}
=\frac{f_{S\big(A\sharp_pB|B\big)}}{2(1-p)}-\frac{f_{S\big(A\sharp_pB|A\big)}}{2p}.$$
This, with similar arguments as in the proof of Proposition \ref{pr32}, implies the desired result.
\end{proof}

Relationship (\ref{eq44}) justifies that ${\mathcal S}_p(f|g)$ is a reasonable extension of $S_p(A|B)$, from operators to functionals, in the sense of (\ref{eq27}).

For the sake of simplicity, we use in the next theorem and in its proof the following notations:
$${\mathcal G}_p:={\mathcal G}_p(f,g),\; {\mathcal G}_p^*:=\big({\mathcal G}_p(f,g)\big)^*,\; \nabla{\mathcal G}_p:=\nabla\big({\mathcal G}_p(f,g)\big),\;
\partial{\mathcal G}_p:=\partial\big({\mathcal G}_p(f,g)\big).$$

We now are in a position to state the following main result.

\begin{theorem}\label{th41}
Let $f,g\in\tilde{\mathbb R}^{H}$ be such that ${\rm int}\big({\rm dom}\;{\mathcal G}_p(f,g)\big)\neq\emptyset$. Then the following double inequality
\begin{multline}\label{eq45}
\frac{1}{2}\left(\sup_{x^*\in\partial{\mathcal G}_p(x)}{\mathcal T}_{1-p}^*(g|f)(x^*)+{\mathcal T}_p(f|g)(x)\right)\leq{\mathcal S}_p(f|g)(x)\\
\leq\frac{1}{2}\left(-{\mathcal T}_{1-p}(g|f)(x)-\sup_{x^*\in\partial{\mathcal G}_p(x)}{\mathcal T}_p^*(f|g)(x^*)\right)
\end{multline}
holds for any $x\in{\rm int}\big({\rm dom}\;{\mathcal G}_p(f,g)\big)$ and $p\in(0,1)$.
\end{theorem}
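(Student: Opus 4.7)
The plan is to apply Theorem~\ref{th31} twice, with the functional ${\mathcal G}_p(f,g)$ playing the role of the first argument: once against $g$ to estimate ${\mathcal S}({\mathcal G}_p|g)$, and once against $f$ to estimate ${\mathcal S}({\mathcal G}_p|f)$. Since ${\mathcal G}_p(f,g)$ is built by integrating harmonic means (each being the Fenchel conjugate of a convex functional, hence automatically in $\Gamma_{0}(H)$), it inherits convexity and lower semi-continuity; combined with the standing hypothesis ${\rm int}({\rm dom}\;{\mathcal G}_p(f,g))\neq\emptyset$, this makes Theorem~\ref{th31} directly applicable with $f$ replaced by ${\mathcal G}_p$.

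Explicitly, applying (\ref{eq32}) to both relative entropies yields, for every $x\in{\rm int}({\rm dom}\;{\mathcal G}_p)$,
\begin{equation*}
\sup_{x^*\in\partial{\mathcal G}_p(x)}\big({\mathcal G}_p^*-g^*\big)(x^*)\leq{\mathcal S}({\mathcal G}_p|g)(x)\leq (g-{\mathcal G}_p)(x),
\end{equation*}
\begin{equation*}
\sup_{x^*\in\partial{\mathcal G}_p(x)}\big({\mathcal G}_p^*-f^*\big)(x^*)\leq{\mathcal S}({\mathcal G}_p|f)(x)\leq (f-{\mathcal G}_p)(x).
\end{equation*}
Because of the minus sign in the definition (\ref{eq41}), the lower bound for ${\mathcal S}_p(f|g)(x)$ is obtained by combining the \emph{lower} estimate for ${\mathcal S}({\mathcal G}_p|g)$ with the \emph{upper} estimate for ${\mathcal S}({\mathcal G}_p|f)$, while the upper bound follows from the opposite pairing. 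The positive weights $1/(2(1-p))$ and $1/(2p)$ pass through the suprema without affecting the direction of the inequalities.

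It remains to recognise the resulting quantities as the Tsallis-type functionals of the statement. Proposition~\ref{pr31}(i) rewrites
\begin{equation*}
\frac{{\mathcal G}_p^*-g^*}{1-p}={\mathcal T}_{1-p}^*(g|f),\qquad\frac{{\mathcal G}_p^*-f^*}{p}={\mathcal T}_p^*(f|g),
\end{equation*}
while the symmetry ${\mathcal G}_p(f,g)={\mathcal G}_{1-p}(g,f)$ together with the definition of ${\mathcal T}_p(\cdot|\cdot)$ gives
\begin{equation*}
\frac{{\mathcal G}_p-f}{p}={\mathcal T}_p(f|g),\qquad\frac{g-{\mathcal G}_p}{1-p}=-{\mathcal T}_{1-p}(g|f).
\end{equation*}
Substituting these four identities into the two combinations of the previous step reproduces exactly (\ref{eq45}).

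The main delicate point I expect is to ensure that the signed linear combination defining ${\mathcal S}_p(f|g)(x)$ is unambiguous, i.e.\ that no $+\infty-(+\infty)$ occurs. This is precisely why the hypothesis concerns the interior of ${\rm dom}\;{\mathcal G}_p(f,g)$: on this set ${\mathcal G}_p(f,g)(x)$ is finite, and the bounds provided by Theorem~\ref{th31} force ${\mathcal S}({\mathcal G}_p|g)(x)$ and ${\mathcal S}({\mathcal G}_p|f)(x)$ to lie in $\mathbb{R}\cup\{-\infty\}$, so the combination makes sense. Once this finiteness issue is under control, the rest of the argument is a direct algebraic rearrangement of the four inequalities obtained from Theorem~\ref{th31}.
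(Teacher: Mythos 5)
Your proposal is correct and follows essentially the same route as the paper: apply Theorem~\ref{th31} with ${\mathcal G}_p(f,g)$ in the first slot against $f$ and against $g$, combine the two double inequalities with the weights $-1/(2p)$ and $1/(2(1-p))$ coming from the definition (\ref{eq41}), and identify the resulting expressions via Proposition~\ref{pr31} and the definitions of ${\mathcal T}_p$ and ${\mathcal T}_p^*$. Your added remarks on why ${\mathcal G}_p(f,g)$ satisfies the hypotheses of Theorem~\ref{th31} and on avoiding $+\infty-(+\infty)$ only make explicit details the paper leaves implicit.
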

\begin{proof}
Since ${\rm int}\big({\rm dom}\;{\mathcal G}_p(f,g)\big)\neq\emptyset$ then $\partial{\mathcal G}_p(x)\neq\emptyset$ for any $x\in{\rm int}\big({\rm dom}\;{\mathcal G}_p(f,g)\big)$.

Now, according to Theorem \ref{th31} we have, for $x\in{\rm int}\big({\rm dom}\;{\mathcal G}_p(f,g)\big)$
\begin{equation}\label{eq46}
\sup_{x^*\in\partial{\mathcal G}_p(x)}\big({\mathcal G}_p^*-f^*\big)(x^*)\leq{\mathcal S}\big({\mathcal G}_p|f\big)(x)\leq\big(f-{\mathcal G}_p\big)(x),
\end{equation}
and
\begin{equation}\label{eq47}
\sup_{x^*\in\partial{\mathcal G}_p(x)}\big({\mathcal G}_p^*-g^*\big)(x^*)\leq{\mathcal S}\big({\mathcal G}_p|g\big)(x)
\leq\big(g-{\mathcal G}_p\big)(x).
\end{equation}
Multiplying (\ref{eq46}) by $-1/p$ and (\ref{eq47}) by $1/(1-p)$ and then summing side to side we obtain the desired inequalities, after simple manipulations with the help of Proposition \ref{pr31}. Details are simple and therefore omitted here.
\end{proof}

\begin{remark}
It is worth mentioning that the condition ${\rm int}\big({\rm dom}\;{\mathcal G}_p(f,g)\big)\neq\emptyset$ is satisfied if ${\rm int}\big({\rm dom}\;f\cap{\rm dom}\;g\big)\neq\emptyset$, since ${\rm dom}\;f\cap{\rm dom}\;g\subset{\rm dom}\;{\mathcal G}_p(f,g)$.
\end{remark}

\begin{corollary}\label{cor41}
Let $f,g\in\tilde{\mathbb R}^{H}$ be such that ${\mathcal G}_p(f,g)$ is G-differentiable at $x\in H$. Then the following inequalities (in the point-wise order sense)
\begin{multline*}\label{eq48}
\frac{1}{2}\Big({\mathcal T}_{1-p}^*(g|f)\big(\nabla{\mathcal G}_p(f,g)\big)+{\mathcal T}_p(f|g)\Big)\leq{\mathcal S}_p(f|g)\\
\leq\frac{1}{2}\Big(-{\mathcal T}_{1-p}(g|f)-{\mathcal T}_{p}^*(f|g)\big(\nabla{\mathcal G}_p(f,g)\big)\Big)
\end{multline*}
hold for any $p\in(0,1)$.
\end{corollary}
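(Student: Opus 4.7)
The plan is to deduce Corollary \ref{cor41} as a direct specialization of Theorem \ref{th41}, exploiting the fact that G-differentiability of a convex functional at a point forces its sub-differential there to be the singleton consisting of its G-derivative.

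First I would verify that ${\mathcal G}_p(f,g)$ is convex. This follows at once from the integral representation
\[
{\mathcal G}_{p}(f,g)=\frac{\sin(p\pi)}{\pi}\int_{0}^{1}\frac{t^{p-1}}{(1-t)^{p}}\,{\mathcal H}_{t}(f,g)\,dt,
\]
since every ${\mathcal H}_t(f,g)=\bigl((1-t)f^*+tg^*\bigr)^*$ is convex as a Fenchel conjugate, and convexity is preserved under positive integration. Consequently, by the standard result recalled in Section 3 (\emph{for a convex functional, G-differentiability at $x$ implies $\partial\varphi(x)=\{\nabla\varphi(x)\}$}), the assumption that ${\mathcal G}_p(f,g)$ is G-differentiable at $x$ yields
\[
\partial{\mathcal G}_p(x)=\{\nabla{\mathcal G}_p(f,g)(x)\}\neq\emptyset,
\]
which in particular places $x$ inside ${\rm int}({\rm dom}\,{\mathcal G}_p(f,g))$ (otherwise the sub-differential would fail to be non-empty), so the hypothesis of Theorem \ref{th41} is fulfilled at this $x$.

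Once this reduction is in place, the proof collapses: apply the double inequality (\ref{eq45}) at $x$ and notice that the two suprema
\[
\sup_{x^*\in\partial{\mathcal G}_p(x)}{\mathcal T}_{1-p}^*(g|f)(x^*)\quad\text{and}\quad \sup_{x^*\in\partial{\mathcal G}_p(x)}{\mathcal T}_p^*(f|g)(x^*)
\]
are taken over a singleton, hence each collapses to an evaluation at the unique element $\nabla{\mathcal G}_p(f,g)(x)$. Rewriting (\ref{eq45}) with these replacements gives precisely the announced pointwise bounds, uniformly in every $x$ at which ${\mathcal G}_p(f,g)$ is G-differentiable.

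I do not anticipate any real obstacle: all the analytical content already sits inside Theorem \ref{th41}. The only point requiring a moment's care is the joint use of (i) convexity of ${\mathcal G}_p(f,g)$ inherited from the integral formula, and (ii) the implication \emph{convex} $+$ \emph{G-differentiable} $\Rightarrow$ \emph{sub-differential is a singleton}; both are recorded explicitly in Section 3 of the paper.
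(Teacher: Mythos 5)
Your proposal is correct and follows essentially the same route as the paper: G-differentiability of the convex functional ${\mathcal G}_p(f,g)$ at $x$ reduces $\partial{\mathcal G}_p(x)$ to the singleton $\{\nabla{\mathcal G}_p(f,g)(x)\}$, and substituting this into (\ref{eq45}) collapses the suprema to evaluations. One small caveat: your justification that $x$ lies in ${\rm int}({\rm dom}\,{\mathcal G}_p(f,g))$ ``otherwise the sub-differential would be empty'' reverses the implication recorded in Section 3 (non-emptiness of $\partial\varphi(x)$ does not in general force interior membership); the cleaner argument is that G-differentiability requires finite directional derivatives in every direction, placing $x$ in the core of the domain --- but the paper itself glosses over this point entirely.
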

\begin{proof}
Since ${\mathcal G}_p(f,g)$ is G-differentiable at $x$ then $\partial{\mathcal G}_p(f,g)(x)=\{\nabla{\mathcal G}_p(f,g)(x)\}$. Substituting this in (\ref{eq45}) and using the definition of the point-wise order we immediately obtain the desired inequalities.
\end{proof}

The operator version of the above theorem (and corollary) reads as follows.

\begin{corollary}\label{cor42}
Let $A,B\in{\mathcal B}^{+*}(H)$ and $p\in(0,1)$. Then we have
\begin{multline*}
\frac{1}{2}\Big((A\sharp_pB)T_{1-p}(B^{-1}|A^{-1})(A\sharp_pB)+T_p(A|B)\Big)\leq S_p(A|B)\\
\leq\frac{1}{2}\Big(-T_{1-p}(B|A)-(A\sharp_pB)T_p(A^{-1}|B^{-1})(A\sharp_pB)\Big).
\end{multline*}
\end{corollary}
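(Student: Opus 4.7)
The plan is to specialise Corollary \ref{cor41} to the quadratic functionals $f=f_A$ and $g=f_B$, and then convert every resulting inequality of quadratic functionals back into an operator inequality. Since $A\sharp_pB\in{\mathcal B}^{+*}(H)$, Example \ref{ex31}(i) combined with (\ref{eq27}) gives ${\mathcal G}_p(f_A,f_B)=f_{A\sharp_pB}$, which is G-differentiable on all of $H$ with $\nabla{\mathcal G}_p(f_A,f_B)(x)=(A\sharp_pB)x$. Hence the hypothesis of Corollary \ref{cor41} is met, and the resulting double inequality holds pointwise on $H$.

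First I would identify each term appearing in Corollary \ref{cor41} with an explicit quadratic functional. The central object ${\mathcal S}_p(f_A|f_B)$ equals $f_{S_p(A|B)}$ by Proposition \ref{pr42}. From the definition of ${\mathcal T}_p$, together with (\ref{eq27}) and the rules $\alpha f_T=f_{\alpha T}$, $f_T\pm f_S=f_{T\pm S}$ of Example \ref{ex31}(ii), one obtains ${\mathcal T}_p(f_A|f_B)=f_{T_p(A|B)}$ and ${\mathcal T}_{1-p}(f_B|f_A)=f_{T_{1-p}(B|A)}$. Proposition \ref{pr32}, applied directly and with the roles of $A,B$ exchanged, takes care of the starred versions: ${\mathcal T}_p^*(f_A|f_B)=f_{T_p(A^{-1}|B^{-1})}$ and ${\mathcal T}_{1-p}^*(f_B|f_A)=f_{T_{1-p}(B^{-1}|A^{-1})}$.

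The main technical step, and the one I expect to require the most care, is evaluating these two starred quadratic functionals at the vector $\nabla{\mathcal G}_p(f_A,f_B)(x)=(A\sharp_pB)x$. Here I would invoke the identity $f_T(Sx)=f_{STS}(x)$ from Example \ref{ex31}(ii), using that $A\sharp_pB$ is self-adjoint (as a geometric mean of positive invertible operators), to conclude
$$
{\mathcal T}_p^*(f_A|f_B)\bigl((A\sharp_pB)x\bigr)=f_{(A\sharp_pB)\,T_p(A^{-1}|B^{-1})\,(A\sharp_pB)}(x),
$$
and analogously with $p$ replaced by $1-p$ and the roles of $A$, $B$ exchanged. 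The only subtlety is to keep track of the order of the sandwich and to verify that the resulting operator is self-adjoint, so that the induced operator inequality is meaningful.

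Finally, Corollary \ref{cor41} becomes a pointwise inequality between quadratic functionals $f_T$ and $f_S$ with self-adjoint generators; such an inequality is equivalent to $T\leq S$ in the operator sense, since $f_T(x)\leq f_S(x)$ for all $x\in H$ amounts to $\langle(S-T)x,x\rangle\geq0$. Peeling off the quadratic wrappers term by term, once more via $\alpha f_T=f_{\alpha T}$ and $f_T\pm f_S=f_{T\pm S}$, yields exactly the double operator inequality stated in Corollary \ref{cor42}.
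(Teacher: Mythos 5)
Your proposal is correct and follows exactly the route the paper intends: it specializes Corollary \ref{cor41} to $f_A$, $f_B$ and translates back via Proposition \ref{pr32}, Proposition \ref{pr42} and Example \ref{ex31}(ii), which is precisely the combination the paper cites while omitting the details. You have simply supplied the ``simple manipulations'' the authors left out, including the key step $f_T(Sx)=f_{STS}(x)$ with $S=A\sharp_pB$ self-adjoint.
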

\begin{proof}
Combining Corollary \ref{cor41}, Proposition \ref{pr32} and Example \ref{ex31},(ii) we obtain the desired operator inequalities after simple manipulations. Details are simple and therefore omitted here.
\end{proof}

Corollary \ref{cor42} gives the relation between Furuta parametric relative operator entropy and Tsallis relative operator entropy, in more general setting than the result in \cite[Theorem 2.3]{FM}.

%The first inequality in Corollary 4.7 is equivalent to
%$f_p(x) \le x^p \log x$ for $x>0$ and $0<p<1$, where
%$$
%f_p(x) \equiv \frac{x^p-x^{2p-1}}{2(1-p)}+\frac{x^p-1}{2p}.
%$$
%The second inequality in \cite[Theorem 2.3]{FM}  is equivalent to
%$g_p(x) \le x^p\log x$ for $x \ge 1$ and $-1 \le p \le 1$ with $p \neq 0$,
%where
%$$
%g_p(x) \equiv \frac{2(x^p-1)}{p}-\log x.
%$$
%Numerical computations show $f_p(x) \le g_p(x)$ for $x \ge 1$ and $0<p<1$. So we may not write the comparison between Corollary 4.7 and \cite[Theorem 2.3]{FM}.

\section*{Acknowledgement}
The author (S.F.) was partially supported by JSPS KAKENHI Grant Number 16K05257.

\end{document}